\providecommand{\U}[1]{\protect\rule{.1in}{.1in}}
\newtheorem{theorem}{Theorem}[section]
\newtheorem{remark}[theorem]{Remark}
\newtheorem{lemma}[theorem]{Lemma}
\newtheorem{definition}[theorem]{Definition}
\numberwithin{equation}{section}
\begin{document}
\author{Daniel Pellegrino and Joedson Santos}
\address[D. Pellegrino] {Departamento de Matem\'{a}tica, Universidade Federal da
Para\'{\i}ba, 58.051-900 - Jo\~{a}o Pessoa, Brazil.}
\email{pellegrino@pq.cnpq.br and dmpellegrino@gmail.com}
\address[J. Santos] {Departamento de Matem\'{a}tica, Universidade Federal da
Para\'{\i}ba, 58.051-900 - Jo\~{a}o Pessoa, Brazil.}
\email{joedsonmat@gmail.com}
\thanks{2010 Mathematics Subject Classification: 47B10, 47L22, 15A03.}
\thanks{Daniel Pellegrino was supported by CNPq. }
\thanks{Joedson Santos was supported by CNPq (Edital Universal 14/2012)}
\keywords{Uniformly dominated sets, Pietsch Domination Theorem, absolutely summing operators}
\title[ ]{Lineability and uniformly dominated sets of summing nonlinear operators}

\begin{abstract}
In this note we prove an abstract version of a result from 2002 due to Delgado
and Pi\~{n}ero on absolutely summing operators. Several applications are
presented; some of them in the multilinear framework and some in a completely
nonlinear setting. In a final section we investigate the size of the set of
non uniformly dominated sets of linear operators under the point of view of lineability.

\end{abstract}
\maketitle

\section{Introduction}

Let $X,Y$ be Banach spaces over a fixed scalar field
$\mathbb{K}=\mathbb{R}$ or $\mathbb{C}$. By $B_{X}=\{x\in
X:\left\Vert x\right\Vert \leq1\}$ we shall denote its closed unit
ball and $X^{\ast}$ shall be the topological dual of $X$. If $1\leq
p<\infty,$ a linear operator $T:X\rightarrow Y$ is said to be
absolutely $p$-summing if there exists a
constant $C_{T}\geq0$ such that%

\begin{equation}
\left(
%TCIMACRO{\dsum \limits_{i=1}^{n}}%
%BeginExpansion
{\displaystyle\sum\limits_{i=1}^{n}}
%EndExpansion
\left\Vert T(x_{i})\right\Vert ^{p}\right)  ^{1/p}\leq C_{T}\sup_{\varphi\in
B_{X^{\ast}}}\left(
%TCIMACRO{\dsum \limits_{i=1}^{n}}%
%BeginExpansion
{\displaystyle\sum\limits_{i=1}^{n}}
%EndExpansion
|\varphi(x_{i})|^{p}\right)  ^{1/p} \label{s}%
\end{equation}
for every finite set $\{x_{1},...,x_{n}\}\subset X$. For details, we refer to
the classical book by Diestel, Jarchow, Tonge \cite{djt}.

The class of absolutely $p$-summing linear operators from $X$ to $Y$ will be
represented, as it is usual, by $\Pi_{p}\left(  X,Y\right)  $ and the infimum
of all $C_{T}$ that satisfy the above inequalities defines a norm on
\ $\Pi_{p}\left(  X,Y\right)  $, denoted by $\pi_{p}(T)$.

The Pietsch Domination Theorem is one of the most important results of the
theory of summing operators and makes a surprising bridge linking Measure
Theory and summing operators. It asserts that a continuous linear operator
$T\colon X\longrightarrow Y$ between Banach spaces is absolutely $p$-summing
if and only if there is a constant $C_{T}\geq0$ and a Borel probability
measure $\mu$ on the closed unit ball of the dual of $X,$ $\left(  B_{X^{\ast
}},\sigma(X^{\ast},X)\right)  ,$ such that%
\begin{equation}
\left\Vert T(x)\right\Vert \leq C_{T}\cdot\left(  \int_{B_{X^{\ast}}%
}\left\vert \varphi(x)\right\vert ^{p}d\mu(\varphi)\right)  ^{\frac{1}{p}}
\label{gupdt}%
\end{equation}
for every $x\in X$.

Let us recall that a subset $\mathcal{M}$ of $\Pi_{p}\left(  X,Y\right)  $ is
called \emph{uniformly dominated} if there exists a positive Radon measure
$\mu$ defined on the compact space $\left(  B_{X^{\ast}},\sigma(X^{\ast
},X)\right)  $ such that
\begin{equation}
\left\Vert T(x)\right\Vert ^{p}\leq\int_{B_{X^{\ast}}}\left\vert
\varphi(x)\right\vert ^{p}d\mu(\varphi) \label{gupdt2}%
\end{equation}
for all $x\in X$ and all $T\in\mathcal{M}$. The main result of \cite{DP} is a
characterization of uniformly dominated sets when $Y$ is a Banach space that
has no finite cotype.

\begin{theorem}
\label{dp} (Delgado and Pi\~{n}ero \cite{DP}) Let $1\leq p<\infty$, let $X$ be
a Banach space, $Y$ be a Banach space that has no finite cotype, and
$\mathcal{M}\subset\Pi_{p}\left(  X,Y\right)  $. The following statements are equivalent:

(a) $\mathcal{M}$ is uniformly dominated.

(b) There is a constant $C>0$ such that, for every $\{x_{1},...,x_{n}\}
\subset X$ and $\{T_{1},...,T_{n}\} \subset\mathcal{M}$, there exists an
operator $T\in\Pi_{p}\left(  X,Y\right)  $ satisfying $\pi_{p}(T)\leq C$ and
\[
\left\Vert T_{i}(x_{i})\right\Vert \leq\left\Vert T(x_{i})\right\Vert
,\ i=1,...,n.
\]

\end{theorem}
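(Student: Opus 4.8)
The plan is to prove the two implications separately, and the point worth flagging in advance is that only the implication (a)$\Rightarrow$(b) will use the hypothesis that $Y$ has no finite cotype; the reverse implication will be a general minimax argument valid for an arbitrary target space.

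For (a)$\Rightarrow$(b), I would fix the dominating measure $\mu$ furnished by \eqref{gupdt2} and factor everything through $L_{p}(\mu)$. Let $j_{p}\colon X\to L_{p}(\mu)$ be the canonical map sending $x$ to the function $\varphi\mapsto\varphi(x)$ on $B_{X^{\ast}}$, that is, the isometric embedding $X\hookrightarrow C(B_{X^{\ast}})$ followed by the formal inclusion $C(B_{X^{\ast}})\to L_{p}(\mu)$; this $j_{p}$ is absolutely $p$-summing with $\pi_{p}(j_{p})\le\mu(B_{X^{\ast}})^{1/p}$. Given finite families $\{x_{1},\dots,x_{n}\}\subset X$ and $\{T_{1},\dots,T_{n}\}\subset\mathcal{M}$, set $f_{i}=j_{p}(x_{i})$ and observe that \eqref{gupdt2} yields $\|f_{i}\|_{L_{p}(\mu)}\ge\|T_{i}(x_{i})\|$. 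Choose norm-one functionals $\psi_{i}\in L_{p}(\mu)^{\ast}$ with $\psi_{i}(f_{i})=\|f_{i}\|_{L_{p}(\mu)}$. Here is where the absence of finite cotype enters: by the Maurey--Pisier theorem $Y$ contains the spaces $\ell_{\infty}^{n}$ uniformly, so there is an embedding $u\colon\ell_{\infty}^{n}\to Y$ with $\|a\|_{\infty}\le\|u(a)\|\le 2\|a\|_{\infty}$ and distortion independent of $n$. Defining $S\colon L_{p}(\mu)\to Y$ by $S(z)=u\big((\psi_{i}(z))_{i=1}^{n}\big)$ gives $\|S\|\le 2$ and $\|S(f_{i})\|\ge\max_{j}|\psi_{j}(f_{i})|\ge|\psi_{i}(f_{i})|\ge\|T_{i}(x_{i})\|$. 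Then $T=S\circ j_{p}$ satisfies $\|T(x_{i})\|\ge\|T_{i}(x_{i})\|$ and $\pi_{p}(T)\le\|S\|\,\pi_{p}(j_{p})\le 2\,\mu(B_{X^{\ast}})^{1/p}=:C$, a constant independent of the chosen families.

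For (b)$\Rightarrow$(a), I would run the Ky Fan minimax argument underlying the Pietsch Domination Theorem. On the weak-$\ast$ compact convex set $P$ of Borel probability measures on $B_{X^{\ast}}$, consider for each $x\in X$ and $T\in\mathcal{M}$ the affine, weak-$\ast$ continuous function $\Lambda_{x,T}(\nu)=\|T(x)\|^{p}-C^{p}\int_{B_{X^{\ast}}}|\varphi(x)|^{p}\,d\nu(\varphi)$, and let $\mathcal{G}$ be the convex cone generated by the $\Lambda_{x,T}$ (using $\Lambda_{sx,T}=s^{p}\Lambda_{x,T}$ for positive scalings). Ky Fan's lemma produces a single $\nu_{0}\in P$ with $g(\nu_{0})\le 0$ for every $g\in\mathcal{G}$, provided each $g$ has nonpositive infimum over $P$; since $\sup_{\nu\in P}\int h\,d\nu=\sup_{\varphi}h(\varphi)$ for continuous $h$, this requirement is exactly
\[
\sum_{i=1}^{n}\|T_{i}(x_{i})\|^{p}\le C^{p}\sup_{\varphi\in B_{X^{\ast}}}\sum_{i=1}^{n}|\varphi(x_{i})|^{p}
\]
for all finite families. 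This last inequality is where (b) is used: taking the single operator $T$ it provides, $\sum_{i}\|T_{i}(x_{i})\|^{p}\le\sum_{i}\|T(x_{i})\|^{p}\le\pi_{p}(T)^{p}\sup_{\varphi}\sum_{i}|\varphi(x_{i})|^{p}\le C^{p}\sup_{\varphi}\sum_{i}|\varphi(x_{i})|^{p}$ by \eqref{s}. Applying $g(\nu_{0})\le 0$ to the individual $\Lambda_{x,T}$ and setting $\mu=C^{p}\nu_{0}$ gives \eqref{gupdt2}, so $\mathcal{M}$ is uniformly dominated.

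I expect the main obstacle to be the step in (a)$\Rightarrow$(b) that builds a single $p$-summing operator dominating arbitrarily long finite families with a constant independent of their length; this is precisely what the uniformly bounded distortion of the embeddings of $\ell_{\infty}^{n}$ affords, and it is the only place where the no-finite-cotype hypothesis is genuinely needed. By contrast, the (b)$\Rightarrow$(a) direction is a routine minimax argument that holds for every Banach space $Y$, which suggests that the abstract version should retain exactly this asymmetry.
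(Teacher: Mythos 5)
Your proof is correct, and its first half is essentially the paper's argument in different clothing, while its second half takes a genuinely different (and in one respect more careful) route. For (a)$\Rightarrow$(b), the paper proves the abstract Theorem~\ref{dpa} and recovers the linear statement as a special case: it picks norming functions $g_{i}\in L_{p^{\prime}}(\mu)$ and defines $f(x)=\sum_{j}\bigl(\int_{K}R(\varphi,x,0)g_{j}(\varphi)\,d\mu\bigr)y_{j}$ with $y_{j}=J_{n}e_{j}$, where $J_{n}\colon\ell_{\infty}^{n}\to Y$ comes from the absence of finite cotype via James, with $\Vert J_{n}^{-1}\Vert=1$ and $\Vert J_{n}\Vert\le 1+\varepsilon$. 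Your operator $T=S\circ j_{p}$ is literally this same map written as a factorization through $L_{p}(\mu)$ (your $\psi_{i}$ are the paper's $g_{i}$, your $u$ is $J_{n}$), and your bound $\pi_{p}(T)\le\Vert S\Vert\,\pi_{p}(j_{p})$ via the ideal property replaces the paper's direct H\"older computation in Lemma~\ref{rsa}; the only cosmetic difference is your distortion constant $2$ versus the paper's $1+\varepsilon$, costing you $C=2\mu(B_{X^{\ast}})^{1/p}$ instead of $\mu(B_{X^{\ast}})^{1/p}$. For (b)$\Rightarrow$(a), the paper derives the joint inequality \eqref{111} exactly as you do (via $\sum_{i}\Vert T_{i}(x_{i})\Vert^{p}\le\sum_{i}\Vert T(x_{i})\Vert^{p}\le C^{p}\sup_{\varphi}\sum_{i}|\varphi(x_{i})|^{p}$) and then delegates to Lemma~\ref{gol}, whose converse invokes the abstract Pietsch Domination Theorem (Theorem~\ref{tu}); you instead rerun the underlying Ky Fan minimax directly, with the function family indexed by \emph{pairs} $(x,T)$, $T\in\mathcal{M}$. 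This is worth highlighting: as literally written, the converse of Lemma~\ref{gol} applies Theorem~\ref{tu} to a single $f\in\mathcal{M}$ and thus produces a probability measure that a priori depends on $f$; the uniformity (one measure for all of $\mathcal{M}$) requires applying the domination argument jointly to the family, which is precisely what your indexing by $(x,T)$ builds in explicitly. So your route buys a self-contained and cleanly uniform proof of (b)$\Rightarrow$(a) valid for arbitrary $Y$, at the cost of redoing the minimax machinery that the paper outsources; both agree that the no-finite-cotype hypothesis is used only in (a)$\Rightarrow$(b).
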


In this note we prove an abstract version of Theorem \ref{dp} and, as
particular cases, we obtain versions of Theorem \ref{dp} in quite different
contexts. Some of our arguments are essentially an abstraction of the results
of \cite{DP}. We also estimate the size of the non uniformly dominated sets in
the sense of the theory of lineability. For details on lineability we refer to
\cite{aron, bernal} and the references therein.

\section{Preliminaries}

In the recent years a series of works (\cite{jfa, BPRn, psjmaa, joed, adv})
related to the Pietsch Domination Theorem have shown that this cornerstone of
the theory of summing operators in fact needs almost no linear structure to be
valid (see Theorem \ref{tu} below); this new panorama of the subject has been
proved to be useful in different frameworks (see, for instance, \cite{achour,
AB, BS, Janilson, CDo, RP}).

Let $X$, $Y$ and $E$ be (arbitrary) non-void sets, $\mathcal{H}\left(
X;Y\right)  $ be a non-void family of mappings from $X$ to $Y$, $G$ be a
Banach space and $K$ be a compact Hausdorff topological space. Let
\[
R\colon K\times E\times G\longrightarrow\lbrack0,\infty)~\text{and}%
\mathrm{~}S\colon\mathcal{H}\left(  X;Y\right)  \times E\times
G\longrightarrow\lbrack0,\infty)
\]
be arbitrary mappings and $1\leq p<\infty$. According to \cite{BPRn, psjmaa} a
mapping $f\in\mathcal{H}\left(  X;Y\right)  $ is $RS$-abstract $p$-summing if
there is a constant $C_{f}\geq0$ such that
\begin{equation}
\left(  \sum_{i=1}^{m}S(f,x_{i},b_{i})^{p}\right)  ^{\frac{1}{p}}\leq
C_{f}\sup_{\varphi\in K}\left(  \sum_{i=1}^{m}R\left(  \varphi,x_{i}%
,b_{i}\right)  ^{p}\right)  ^{\frac{1}{p}}, \label{33M}%
\end{equation}
for all $x_{1},\ldots,x_{m}\in E,$ $b_{1},\ldots,b_{m}\in G$ and
$m\in\mathbb{N}$. We define
\[
{\mathcal{H}}_{RS,p}(X;Y)=\left\{  f\in\mathcal{H}\left(  X;Y\right)  :\text{
}f\text{ is }RS\text{-abstract }p\text{-summing}\right\}
\]
and the infimum of the $C_{f}$'s satisfying (\ref{33M}) is denoted by
$\pi_{_{RS,p}}\left(  f\right)  .$

Suppose that $R$ is such that the mapping
\begin{equation}
R_{x,b}\colon K\longrightarrow\lbrack0,\infty)~\text{defined by}%
~R_{x,b}(\varphi)=R(\varphi,x,b) \label{ll9}%
\end{equation}
is continuous for every $x\in E$ and $b\in G$. The formulation of the Pietsch
Domination Theorem from \cite{psjmaa} reads as follows:

\begin{theorem}
[Abstract Pietsch Domination Theorem (\cite{BPRn, psjmaa})]\label{tu} Suppose
that $S$ is arbitrary, $R$ satisfies \textbf{(}\ref{ll9}\textbf{)} and let
$1\leq p<\infty$. A map $f\in\mathcal{H}\left(  X;Y\right)  $ is $RS$-abstract
$p$-summing if and only if there is a constant $C_{f}\geq0$ and a Borel
probability measure $\mu$ on $K$ such that%
\begin{equation}
S(f,x,b)\leq C_{f}\left(  \int_{K}R\left(  \varphi,x,b\right)  ^{p}%
d\mu\right)  ^{\frac{1}{p}} \label{olk}%
\end{equation}
for all $x\in E$ and $b\in G.$\newline
\end{theorem}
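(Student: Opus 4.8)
The plan is to prove the Abstract Pietsch Domination Theorem by a standard Ky Fan / Hahn-Banach separation argument, adapted to this abstract $RS$-summing setting. The easy direction is the ``if'' part: assuming the domination inequality \textbf{(}\ref{olk}\textbf{)} holds, I would integrate the $p$-th powers and use the fact that a Borel probability measure has total mass one, so that for any finite family $x_1,\dots,x_m \in E$ and $b_1,\dots,b_m \in G$,
\[
\sum_{i=1}^{m} S(f,x_i,b_i)^p \leq C_f^p \int_K \sum_{i=1}^{m} R(\varphi,x_i,b_i)^p \, d\mu \leq C_f^p \sup_{\varphi \in K} \sum_{i=1}^{m} R(\varphi,x_i,b_i)^p,
\]
which upon taking $p$-th roots is exactly inequality \textbf{(}\ref{33M}\textbf{)}. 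This shows $f$ is $RS$-abstract $p$-summing with $\pi_{RS,p}(f) \leq C_f$.

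For the harder ``only if'' direction, I would work inside the Banach space $C(K)$ of continuous real-valued functions on the compact Hausdorff space $K$, whose dual is the space of (signed) regular Borel measures, with the probability measures forming a weak-$*$ compact convex subset $P(K)$. Given that $f$ is $RS$-abstract $p$-summing with constant $C_f$, for each finite selection of data $(x_i,b_i)_{i=1}^m$ I would define a function on $K$ by
\[
g_{(x_i,b_i)}(\varphi) = \sum_{i=1}^{m} \Bigl( S(f,x_i,b_i)^p - C_f^p \, R(\varphi,x_i,b_i)^p \Bigr),
\]
which lies in $C(K)$ precisely because of hypothesis \textbf{(}\ref{ll9}\textbf{)}: each $R_{x_i,b_i}$ is continuous, and $S(f,x_i,b_i)$ is a constant in $\varphi$. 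The summing inequality guarantees that $\min_{\varphi \in K} g_{(x_i,b_i)}(\varphi) \leq 0$, i.e. every such $g$ attains a nonpositive value somewhere on $K$.

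The key step, and the main obstacle, is to pass from this pointwise information for each individual finite family to the existence of a single measure $\mu$ working simultaneously for all data. The standard route is to consider the set $\mathcal{C} \subset C(K)$ of all such functions $g_{(x_i,b_i)}$ as the data range over all finite selections; one checks this is a convex cone (closure under positive combinations follows by concatenating families, and closure under positive scaling by rescaling the $S$ and $R$ terms appropriately). I would verify that $\mathcal{C}$ is disjoint from the open convex cone of strictly positive functions in $C(K)$, since otherwise some $g$ would be bounded below by a positive constant, contradicting $\min_\varphi g \leq 0$. Applying the Hahn-Banach separation theorem yields a nonzero positive functional on $C(K)$, represented by the Riesz theorem as a positive regular Borel measure, which after normalization gives the desired probability measure $\mu$; the separating inequality $\int_K g \, d\mu \leq 0$ for all $g \in \mathcal{C}$ unwinds, by specializing to singletons $m=1$, into $S(f,x,b)^p \leq C_f^p \int_K R(\varphi,x,b)^p \, d\mu$ for every $x \in E$ and $b \in G$, which is \textbf{(}\ref{olk}\textbf{)}. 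The delicate points are confirming the cone structure of $\mathcal{C}$ under the abstract maps $R$ and $S$ without any linearity assumptions, and ensuring the separating functional is genuinely positive so that Riesz representation produces a positive measure.
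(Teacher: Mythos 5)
You should first note that the paper itself offers no proof of Theorem \ref{tu}: it is imported from \cite{BPRn, psjmaa}, where it is proved via Ky Fan's minimax lemma applied to the weak-star compact convex set of probability measures on $K$. Your ``if'' direction is correct and standard. The gap in your ``only if'' direction sits exactly at the point you yourself flag as delicate: the claim that $\mathcal{C}$ is closed under positive scaling ``by rescaling the $S$ and $R$ terms appropriately.'' In this abstract setting that rescaling does not exist. $S$ is an arbitrary map and $R$ carries no homogeneity in any of its variables; unlike the classical linear case, where one replaces $x_i$ by $\lambda^{1/p}x_i$, there is no admissible substitution in $E$ or $G$ that multiplies $S(f,x,b)^p$ and $R(\varphi,x,b)^p$ by a prescribed $\lambda>0$. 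Consequently $\mathcal{C}$ is only closed under addition (concatenation of families), i.e.\ under multiplication by positive integers; it is not a convex cone, and the Hahn--Banach separation needs $\mathrm{conv}(\mathcal{C})$ to be disjoint from the open cone of strictly positive functions, which does not follow merely from $\min_{\varphi\in K} g(\varphi)\leq 0$ for each generator $g$. This is not a technicality: the absence of such scaling is precisely why \cite{BPRn} originally imposed homogeneity-type hypotheses on $R$ and $S$, and why removing them was the point of \cite{psjmaa}.

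The gap is reparable, in two ways, and either would complete your argument. One is the device of \cite{psjmaa}, which you could adopt verbatim: replace $G$ by $G\times[0,\infty)$ and set $\widetilde{R}(\varphi,x,(b,t))=t\,R(\varphi,x,b)$ and $\widetilde{S}(f,x,(b,t))=t\,S(f,x,b)$, manufacturing the missing homogeneity by fiat; $f$ is $RS$-abstract $p$-summing if and only if it is $\widetilde{R}\widetilde{S}$-abstract $p$-summing, and the dominated conclusion for $\widetilde{R},\widetilde{S}$ specializes at $t=1$ to (\ref{olk}). The other repair stays inside your separation scheme: closure of $\mathcal{C}$ under positive \emph{rational} scalars follows from repetition of families, and for fixed data the map $\lambda\mapsto\min_{\varphi\in K}\sum_{i=1}^{m}\lambda_i\bigl(S(f,x_i,b_i)^p-C_f^p R(\varphi,x_i,b_i)^p\bigr)$ is a minimum of affine functions of $\lambda$, hence concave and continuous, so density of the rationals yields $\min_{\varphi\in K}g(\varphi)\leq 0$ for every $g\in\mathrm{conv}(\mathcal{C})$. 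With that lemma in place, the remainder of your proposal --- separation from the open positive cone, positivity of the functional (using that $\mathcal{C}$ absorbs multiplication by $n\in\mathbb{N}$ to force $\int g\,d\nu\leq 0$), Riesz representation, normalization, and specialization to $m=1$ --- is sound, and is in substance the $C(K)$-separation cousin of the Ky Fan argument used in the cited sources.
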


In the next section, we present the main result of this note, the
general version for Theorem \ref{dp} which, as the Abstract Pietsch
Domination Theorem, does not need much of the linear framework.

\begin{remark}
In the definition of $R$ we can have $R\colon K\times E\times
G\longrightarrow\mathbb{K}$ (with $\mathbb{K}$ instead of
$[0,\infty)$) and when needed we replace $R$ by $|R|$. In this case
we say $|R|S$-abstract $p$-summing.
\end{remark}

\begin{remark}
We stress that the constants from (\ref{33M}) and (\ref{olk}) can be chosen to
be the same.
\end{remark}

\section{Main Result}

The next definition is an abstract disguise of the notion of uniformly
dominated operators presented in the introduction.

\begin{definition}
A subset $\mathcal{M}$ of $\mathcal{H}_{RS,p}(X;Y)$ is \emph{uniformly
dominated} if there exists a positive Radon measure $\mu$ defined on the
compact space $K$ such that
\begin{equation}
S(f,x,b)^{p}\leq\int_{K}R\left(  \varphi,x,b\right)  ^{p}d\mu(\varphi)
\label{uda}%
\end{equation}
for all $f\in\mathcal{M}$, $x\in X$ and $b\in G.$
\end{definition}

The following lemma is somewhat expected and simple, but useful.

\begin{lemma}
\label{gol} A subset $\mathcal{M}$ of $\mathcal{H}_{RS,p}(X;Y)$ is uniformly
dominated if and only if there is a constant $C\geq0$ such that%

\begin{equation}
\left(  \sum_{i=1}^{m}S(f_{i},x_{i},b_{i})^{p}\right)  ^{\frac{1}{p}}\leq
C\sup_{\varphi\in K}\left(  \sum_{i=1}^{m}R\left(  \varphi,x_{i},b_{i}\right)
^{p}\right)  ^{\frac{1}{p}}, \label{111}%
\end{equation}
for all $\{f_{1},...,f_{m}\}\subset\mathcal{M}$, $\{x_{1},...,x_{m}\}\subset
E$ and $\{b_{1},\ldots,b_{m}\}\subset G$.
\end{lemma}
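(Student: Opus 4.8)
The plan is to prove the two directions separately; the forward implication (uniform domination $\Rightarrow$ (\ref{111})) is a one-line integration estimate, while the reverse implication carries the real content, and I would handle it by reducing to the Abstract Pietsch Domination Theorem (Theorem \ref{tu}).

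For the easy direction, assume $\mathcal{M}$ is uniformly dominated by a positive Radon measure $\mu$. Given $f_i\in\mathcal{M}$, $x_i\in E$, $b_i\in G$, I would sum the defining inequalities (\ref{uda}) over $i$, interchange the finite sum with the integral, and bound the integrand pointwise by its supremum over $K$; integrating the resulting constant introduces the factor $\mu(K)$. Taking $p$-th roots then yields (\ref{111}) with $C=\mu(K)^{1/p}$.

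For the reverse direction, the idea is to recast the hypothesis as a single abstract summability statement. I would set $\widetilde{E}:=\mathcal{M}\times E$, define $\widetilde{R}\colon K\times\widetilde{E}\times G\to[0,\infty)$ by $\widetilde{R}(\varphi,(f,x),b):=R(\varphi,x,b)$, fix any one-element family $\widetilde{\mathcal{H}}:=\{F\}$, and set $\widetilde{S}(F,(f,x),b):=S(f,x,b)$. Since $\widetilde{R}_{(f,x),b}=R_{x,b}$, the continuity hypothesis (\ref{ll9}) is inherited by $\widetilde{R}$. With this relabeling, inequality (\ref{111}) for $\mathcal{M}$ says precisely that $F$ is $\widetilde{R}\widetilde{S}$-abstract $p$-summing with $\pi_{\widetilde{R}\widetilde{S},p}(F)\le C$. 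Applying Theorem \ref{tu} to $F$ (and using the remark that the constant may be kept the same), I obtain a Borel probability measure $\mu$ on $K$ with $\widetilde{S}(F,(f,x),b)\le C\,(\int_K\widetilde{R}(\varphi,(f,x),b)^p\,d\mu)^{1/p}$ for all $(f,x)\in\widetilde{E}$ and $b\in G$. Unwinding the definitions, raising to the $p$-th power, and absorbing $C^p$ into the measure by setting $\nu:=C^p\mu$, gives $S(f,x,b)^p\le\int_K R(\varphi,x,b)^p\,d\nu(\varphi)$ for every $f\in\mathcal{M}$, $x\in E$, $b\in G$, which is exactly uniform domination by the positive Radon measure $\nu$.

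The main obstacle is conceptual rather than computational: one must spot that a family-wide domination can be encoded as the summability of a single (dummy) map over an enlarged index set, and then verify that the two hypotheses of Theorem \ref{tu} — non-void sets/family and the continuity condition (\ref{ll9}) — survive the relabeling. Once the encoding is set up correctly, both the definition of abstract summability and the conclusion of Theorem \ref{tu} translate term by term, so no further estimates are needed.
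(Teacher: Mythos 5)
Your proof is correct, and while your forward direction coincides with the paper's (summing (\ref{uda}) over $i$, interchanging the finite sum with the integral, and bounding by $\mu(K)\sup_{\varphi\in K}$, so that $C=\mu(K)^{1/p}$ works), your reverse direction takes a genuinely different---and in fact more careful---route. The paper fixes a single $f\in\mathcal{M}$, notes that (\ref{111}) specialized to $f_{1}=\cdots=f_{m}=f$ makes $f$ an $RS$-abstract $p$-summing map with a constant $C$ independent of $f$, applies Theorem \ref{tu}, and rescales the resulting probability measure to $\overline{\mu}=C^{p}\mu$. As literally written, this produces for each $f$ a measure that a priori depends on $f$, whereas uniform domination demands one measure serving all of $\mathcal{M}$ simultaneously; the paper leaves this uniformity step implicit. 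Your encoding---replacing $E$ by $\widetilde{E}=\mathcal{M}\times E$, taking a singleton family $\{F\}$, and setting $\widetilde{S}(F,(f,x),b)=S(f,x,b)$ and $\widetilde{R}(\varphi,(f,x),b)=R(\varphi,x,b)$---turns the full strength of (\ref{111}), with distinct $f_{i}$'s allowed, into the statement that the single dummy map $F$ is $\widetilde{R}\widetilde{S}$-abstract $p$-summing, so one application of Theorem \ref{tu} yields a single measure valid for every pair $(f,x)$ at once; the continuity hypothesis (\ref{ll9}) passes to $\widetilde{R}$ because $\widetilde{R}_{(f,x),b}=R_{x,b}$, and the remark that the constants in (\ref{33M}) and (\ref{olk}) can be taken equal lets you absorb $C^{p}$ into the measure exactly as the paper does. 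What the paper's route buys is brevity; what yours buys is that the crucial uniformity of the dominating measure across $\mathcal{M}$ is established explicitly rather than asserted, and indeed your relabeling argument can be read as the justification the paper's final sentence needs. Two microscopic loose ends in your write-up: dispose of the trivial case $\mathcal{M}=\emptyset$ so that $\widetilde{E}$ is non-void, as the abstract framework requires, and note that the set notation $\{f_{1},\dots,f_{m}\}$ in (\ref{111}) is understood to allow repetitions, which your identification uses.
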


\begin{proof}
Suppose that $\mathcal{M}$ is a uniformly dominated set of $\mathcal{H}%
_{RS,p}(X;Y)$. Given $\{f_{1},...,f_{m}\}\subset\mathcal{M}$, $\{x_{1}%
,...,x_{m}\}\subset E$ and $\{b_{1},\ldots,b_{m}\}\subset G$ there is a
positive Radon measure $\mu$ such that%

\begin{align*}
\underset{i=1}{\overset{m}{%
%TCIMACRO{\dsum }%
%BeginExpansion
{\displaystyle\sum}
%EndExpansion
}}S(f_{i},x_{i},b_{i})^{p}  &  \leq\underset{i=1}{\overset{m}{%
%TCIMACRO{\dsum }%
%BeginExpansion
{\displaystyle\sum}
%EndExpansion
}}\int_{K}R\left(  \varphi,x_{i},b_{i}\right)  ^{p}d\mu(\varphi)\\
&  =\int_{K}\underset{i=1}{\overset{m}{%
%TCIMACRO{\dsum }%
%BeginExpansion
{\displaystyle\sum}
%EndExpansion
}}R\left(  \varphi,x_{i},b_{i}\right)  ^{p}d\mu(\varphi)\\
&  \leq\mu(K)\cdot\sup_{\varphi\in K}\sum_{i=1}^{m}R\left(  \varphi
,x_{i},b_{i}\right)  ^{p}.
\end{align*}

Conversely, given $f\in\mathcal{M}$, $\{x_{1},...,x_{m}\}\subset E$ and
$\{b_{1},\ldots,b_{m}\}\subset G$ there is, by hypothesis, a constant $C\geq0$
(not depending on $f$) so that
\begin{equation}
\left(  \sum_{i=1}^{m}S(f,x_{i},b_{i})^{p}\right)  ^{\frac{1}{p}}\leq
C\sup_{\varphi\in K}\left(  \sum_{i=1}^{m}R\left(  \varphi,x_{i},b_{i}\right)
^{p}\right)  ^{\frac{1}{p}}. \label{34M}%
\end{equation}
Then $f$ is $RS$-abstract $p$-summing and by Theorem \ref{tu} there is a Borel
probability measure $\mu$ on $K$ such that%
\[
S(f,x,b)\leq C\left(  \int_{K}R\left(  \varphi,x,b\right)  ^{p}d\mu\right)
^{\frac{1}{p}}%
\]
for all $x\in E$ and $b\in G.$ Now, considering the positive Radon measure
$\overline{\mu}=C^{p}\mu$ we complete the proof.
\end{proof}

Henceforth $\frac{1}{p}+\frac{1}{p^{\prime}}=1$ and we suppose that $Y$ is a
Banach space with no finite cotype (for details on cotype we refer to
\cite[Theorem 14.1]{djt}). For this reason, we know that $Y$ contains
$\ell_{\infty}^{n}$'s uniformly (see \cite{djt}). By \cite{James}, for every
$\varepsilon>0$ and $n\in\mathbb{N}$, there is an isomorphism $J_{n}$ from
$\ell_{\infty}^{n}$ onto a subspace of $Y$ satisfying $\left\Vert J_{n}%
^{-1}\right\Vert =1$ and $\left\Vert J_{n}\right\Vert \leq(1+\varepsilon)$ for
all $n\in\mathbb{N}$. We define $y_{j}:=J_{n}e_{j}$, where $e_{j}$ are the
canonical vectors of $\ell_{\infty}^{n}$. From now on, for every $g_{j}\in
L_{p^{\prime}}(K,\mu)$, with $\left\Vert g_{j}\right\Vert _{p^{\prime}}=1$,
$j=1,...,n$ we define $f\colon X\longrightarrow Y$ by%

\begin{equation}
f(x)=\underset{j=1}{\overset{n}{%
%TCIMACRO{\dsum }%
%BeginExpansion
{\displaystyle\sum}
%EndExpansion
}}\left(  \int_{K}R\left(  \varphi,x,0\right)  g_{j}(\varphi)d\mu
(\varphi)\right)  y_{j}. \label{jo3}%
\end{equation}
Henceforth we also suppose that $R$ is constant in $G$; however this
restriction causes no loss for our purposes. From now on we also suppose that
$S(g,x,b)=\left\Vert g(x)\right\Vert $, and this hypothesis is in fact
satisfied in all forthcoming applications (see Section 4). The aforementioned
hypotheses shall be weakened in our final application in Section 5.

\begin{lemma}
\label{rsa} $f$ belongs to $\mathcal{H}_{RS,p}(X;Y)$, with $\pi_{_{RS,p}%
}\left(  f\right)  \leq\mu(K)^{1/p}.$
\end{lemma}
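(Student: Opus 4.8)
The plan is to verify the defining inequality (\ref{33M}) for $f$ directly, exploiting the two standing assumptions that $S(g,x,b)=\left\Vert g(x)\right\Vert$ and that $R$ is constant in its $G$-variable. Under these, the left-hand side of (\ref{33M}) for $f$ is $\left(\sum_{i=1}^{m}\left\Vert f(x_{i})\right\Vert^{p}\right)^{1/p}$, and on the right one may replace each $R(\varphi,x_{i},b_{i})$ by $R(\varphi,x_{i},0)$. Thus everything reduces to controlling $\left\Vert f(x_{i})\right\Vert$ by the $L_{p}(\mu)$-size of the function $\varphi\mapsto R(\varphi,x_{i},0)$.

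First I would factor $f$ through the isomorphism $J_{n}$. Setting
\[
c_{j}(x)=\int_{K}R(\varphi,x,0)g_{j}(\varphi)\,d\mu(\varphi),
\]
the definition (\ref{jo3}) reads $f(x)=\sum_{j=1}^{n}c_{j}(x)y_{j}=J_{n}\big((c_{1}(x),\ldots,c_{n}(x))\big)$, whence
\[
\left\Vert f(x)\right\Vert \leq\left\Vert J_{n}\right\Vert \max_{1\leq j\leq n}|c_{j}(x)|,
\]
because the norm of $\ell_{\infty}^{n}$ is the maximum of the coordinates. This is the structural heart of the argument: the image of $f$ lies in a copy of $\ell_{\infty}^{n}$, whose norm is a supremum rather than a sum, so the $n$ summands defining $f$ do not accumulate with $n$.

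Next I would estimate each coordinate by H\"{o}lder's inequality. Since $\left\Vert g_{j}\right\Vert_{p^{\prime}}=1$ and $\frac1p+\frac1{p^{\prime}}=1$, for every $j$
\[
|c_{j}(x)|\leq\left(\int_{K}R(\varphi,x,0)^{p}\,d\mu\right)^{1/p},
\]
and since this bound is independent of $j$ it also bounds $\max_{j}|c_{j}(x)|$. Raising $\left\Vert f(x_{i})\right\Vert \leq\left\Vert J_{n}\right\Vert\left(\int_{K}R(\varphi,x_{i},0)^{p}\,d\mu\right)^{1/p}$ to the $p$-th power, summing over $i$, interchanging the finite sum with the integral, and bounding the integrand pointwise by its supremum over $K$ gives
\[
\sum_{i=1}^{m}\left\Vert f(x_{i})\right\Vert^{p}\leq\left\Vert J_{n}\right\Vert^{p}\,\mu(K)\,\sup_{\varphi\in K}\sum_{i=1}^{m}R(\varphi,x_{i},0)^{p}.
\]
Taking $p$-th roots yields (\ref{33M}), so $f\in\mathcal{H}_{RS,p}(X;Y)$ with $\pi_{RS,p}(f)\leq\left\Vert J_{n}\right\Vert\,\mu(K)^{1/p}$.

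I do not expect a genuine obstacle. The one point demanding care is the constant: the computation literally produces the factor $\left\Vert J_{n}\right\Vert$, which by construction is at most $1+\varepsilon$, so the honest bound is $\pi_{RS,p}(f)\leq(1+\varepsilon)\mu(K)^{1/p}$; the clean estimate in the statement is this up to the harmless factor $1+\varepsilon$, with $\varepsilon>0$ at our disposal. The essential insight — and the reason the no-finite-cotype hypothesis is invoked — is that the $\ell_{\infty}^{n}$ geometry turns the sum over the $n$ functions $g_{j}$ into a maximum, so that the summing constant stays uniform in $n$.
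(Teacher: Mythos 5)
Your proof is correct, and it takes a modestly different route from the paper's. Both arguments share the same analytic core: H\"older's inequality against the normalized $g_{j}\in L_{p^{\prime}}(\mu)$, plus the $\ell_{\infty}^{n}$ geometry carried by $J_{n}$, yielding the pointwise bound $\left\Vert f(x)\right\Vert \leq\left\Vert J_{n}\right\Vert \left(\int_{K}R(\varphi,x,0)^{p}\,d\mu\right)^{1/p}$. You reach it primally, by factoring $f(x)=J_{n}\bigl((c_{1}(x),\ldots,c_{n}(x))\bigr)$ and using that the $\ell_{\infty}^{n}$-norm is a maximum; the paper reaches it dually, writing $\left\Vert f(x)\right\Vert =\sup_{y^{\ast}\in B_{Y^{\ast}}}\left\vert\langle y^{\ast},f(x)\rangle\right\vert$ via Hahn--Banach and bounding $\sum_{j}\left\vert\langle y^{\ast},y_{j}\rangle\right\vert\leq\left\Vert J_{n}^{\ast}\right\Vert =\left\Vert J_{n}\right\Vert$ --- two faces of the same fact. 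Where you genuinely diverge is the concluding step: you verify the defining summing inequality (\ref{33M}) directly (sum the $p$-th powers, interchange sum and integral, bound by $\mu(K)\sup_{\varphi\in K}$), which is elementary and self-contained, whereas the paper normalizes $\overline{\mu}=\mu/\mu(K)$ to produce the domination inequality (\ref{fla}) and then invokes the Abstract Pietsch Domination Theorem (Theorem \ref{tu}, with the remark that the constants coincide) to conclude $f\in\mathcal{H}_{RS,p}(X;Y)$; the paper's form has the cosmetic advantage of matching the domination language used throughout, but your route avoids any appeal to Theorem \ref{tu}. On the constant, your accounting is in fact the more careful one: since $f$ is built from $J_{n}$, which depends on $\varepsilon$, each fixed $f$ honestly satisfies $\pi_{_{RS,p}}(f)\leq(1+\varepsilon)\mu(K)^{1/p}$, and the paper's phrase ``since $\varepsilon>0$ is arbitrary'' elides this dependence; as you note, the discrepancy is harmless for the application in Theorem \ref{dpa}, where only a uniform constant is needed.
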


\begin{proof}
Given $x\in X$, by one of the versions of the Hahn--Banach Theorem, we have%

\begin{align*}
\left\Vert f(x)\right\Vert  &  =\sup_{y^{\ast}\in B_{Y^{\ast}}}\left\vert
\langle y^{\ast},f(x)\rangle\right\vert \\
&  =\sup_{y^{\ast}\in B_{Y^{\ast}}}\left\vert \langle y^{\ast},\underset
{j=1}{\overset{n}{%
%TCIMACRO{\dsum }%
%BeginExpansion
{\displaystyle\sum}
%EndExpansion
}}\left(  \int_{K}R\left(  \varphi,x,0\right)  g_{j}(\varphi)d\mu
(\varphi)\right)  y_{j}\rangle\right\vert \\
&  \leq\sup_{y^{\ast}\in B_{Y^{\ast}}}\underset{j=1}{\overset{n}{%
%TCIMACRO{\dsum }%
%BeginExpansion
{\displaystyle\sum}
%EndExpansion
}}\left(  \int_{K} R\left(  \varphi,x,0\right)
|g_{j}(\varphi)|d\mu(\varphi)\right)  \left\vert \langle y^{\ast},y_{j}%
\rangle\right\vert \\
&  \leq\sup_{y^{\ast}\in B_{Y^{\ast}}}\underset{j=1}{\overset{n}{%
%TCIMACRO{\dsum }%
%BeginExpansion
{\displaystyle\sum}
%EndExpansion
}}\left(  \int_{K} R\left(  \varphi,x,0\right) ^{p}%
d\mu(\varphi)\right)  ^{1/p}\left(  \int_{K}|g_{j}(\varphi)|^{p^{\prime}}%
d\mu(\varphi)\right)  ^{1/p^{\prime}}\left\vert \langle y^{\ast},y_{j}%
\rangle\right\vert \\
&  =\left(  \int_{K} R\left(  \varphi,x,0\right)
^{p}d\mu(\varphi)\right)  ^{1/p}\sup_{y^{\ast}\in
B_{Y^{\ast}}}\underset
{j=1}{\overset{n}{%
%TCIMACRO{\dsum }%
%BeginExpansion
{\displaystyle\sum}
%EndExpansion
}}\left\vert \langle y^{\ast},y_{j}\rangle\right\vert \\
&  \leq\left(  \int_{K}R\left(  \varphi,x,0\right)
^{p}d\mu(\varphi)\right)  ^{1/p}\left\Vert J_{n}^{\ast}\right\Vert \\
&  =\left(  \int_{K} R\left(  \varphi,x,0\right)
^{p}d\mu(\varphi)\right)  ^{1/p}\left\Vert J_{n}\right\Vert \\
&  \leq\left(  \int_{K} R\left(  \varphi,x,0\right)
^{p}d\mu(\varphi)\right)  ^{1/p}(1+\varepsilon).
\end{align*}

Considering the probability measure $\overline{\mu}=\mu/\mu(K)$, we get%
\begin{equation}
S(f,x,b)\leq\left(  1+\varepsilon\right)  \mu(K)^{1/p}\cdot\left(
\int _{K} R\left(  \varphi,x,0\right) ^{p}d\overline{\mu
}(\varphi)\right)  ^{1/p}.\label{fla}%
\end{equation}

Hence $f\in\mathcal{H}_{RS,p}(X;Y)$. Since $\varepsilon>0$ is arbitrary we
obtain $\pi_{_{RS,p}}\left(  f\right)  \leq\mu(K)^{1/p}.$
\end{proof}

Now we state and prove our main result.

\begin{theorem}
\label{dpa} Let $Y$ is a Banach space with no finite cotype, $\mathcal{M}%
\subset\mathcal{H}_{RS,p}(X;Y)$ and $1\leq p<\infty$. The following statements
are equivalent:

(a) $\mathcal{M}$ is uniformly dominated.

(b) There is a constant $C>0$ such that, for every $\{x_{1},...,x_{n}\}\subset
X$ and $\{f_{1},...,f_{n}\}\subset\mathcal{M}$, there exists an operator
$f\in\mathcal{H}_{RS,p}(X;Y)$ satisfying $\pi_{_{RS,p}}\left(  f\right)  \leq
C$ and%
\[
\left\Vert f_{i}(x_{i})\right\Vert \leq\left\Vert f(x_{i})\right\Vert
,\ i=1,...,n.
\]

\end{theorem}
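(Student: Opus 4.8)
The plan is to prove the equivalence by establishing the two implications separately, relying on Lemma~\ref{gol} to translate the uniform domination hypothesis into the summability inequality~(\ref{111}), and on the explicit operator~$f$ constructed in~(\ref{jo3}) together with Lemma~\ref{rsa} for the reverse direction.

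For the implication $(a)\Rightarrow(b)$, I would argue that uniform domination is the stronger and essentially unconditional hypothesis. Given $\{x_{1},\ldots,x_{n}\}\subset X$ and $\{f_{1},\ldots,f_{n}\}\subset\mathcal{M}$, I would simply exhibit a \emph{single} operator $f$ that dominates all the $f_{i}$ pointwise at the prescribed points. The natural candidate is to take $f$ to be (up to the $J_{n}$ device) the operator built from the common dominating measure $\mu$, choosing the functions $g_{j}$ so that each coordinate reproduces the relevant integral $\int_{K}R(\varphi,x_{i},0)^{p}\,d\mu(\varphi)$. Concretely, by Lemma~\ref{gol} uniform domination gives a measure $\mu$ with $S(f_{i},x_{i},b_{i})^{p}\le\int_{K}R(\varphi,x_{i},b_{i})^{p}\,d\mu(\varphi)$; since $S(f_{i},x_{i},\cdot)=\|f_{i}(x_{i})\|$, this already bounds $\|f_{i}(x_{i})\|$ by the $L_{p}(\mu)$-norm of $R_{x_{i},0}$. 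One then selects $g_{j}$ to be (normalized) functions achieving these norms by duality in $L_{p'}(K,\mu)$ and reads off from~(\ref{jo3}), using $\|J_{n}^{-1}\|=1$, that $\|f(x_{i})\|\ge\|f_{i}(x_{i})\|$; Lemma~\ref{rsa} then supplies the uniform bound $\pi_{_{RS,p}}(f)\le\mu(K)^{1/p}=:C$, independent of the chosen finite family. The key point is that the constant $C$ depends only on the fixed dominating measure, not on $n$ or on the particular operators.

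For the converse $(b)\Rightarrow(a)$, I would use Lemma~\ref{gol} in the reverse direction: it suffices to verify inequality~(\ref{111}) with a uniform constant. Fixing an arbitrary finite family $\{f_{1},\ldots,f_{n}\}\subset\mathcal{M}$, $\{x_{1},\ldots,x_{n}\}\subset X$, hypothesis $(b)$ produces a single $f\in\mathcal{H}_{RS,p}(X;Y)$ with $\pi_{_{RS,p}}(f)\le C$ and $\|f_{i}(x_{i})\|\le\|f(x_{i})\|$ for each $i$. Since $S(f_{i},x_{i},b_{i})=\|f_{i}(x_{i})\|\le\|f(x_{i})\|=S(f,x_{i},b_{i})$, I would bound $\sum_{i}S(f_{i},x_{i},b_{i})^{p}\le\sum_{i}S(f,x_{i},b_{i})^{p}$ and then apply the $RS$-abstract $p$-summing inequality~(\ref{33M}) for the \emph{single} map $f$, which gives $\bigl(\sum_{i}S(f,x_{i},b_{i})^{p}\bigr)^{1/p}\le C\sup_{\varphi\in K}\bigl(\sum_{i}R(\varphi,x_{i},b_{i})^{p}\bigr)^{1/p}$. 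Chaining these yields exactly~(\ref{111}) with the same constant $C$, and Lemma~\ref{gol} converts this into uniform domination.

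I expect the main obstacle to lie in the forward direction $(a)\Rightarrow(b)$, specifically in making the pointwise domination $\|f_{i}(x_{i})\|\le\|f(x_{i})\|$ rigorous through the $\ell_{\infty}^{n}$-embedding. The subtlety is that $f(x)$ is a sum over $j=1,\ldots,n$ of coefficients times $y_{j}=J_{n}e_{j}$, so extracting the $i$-th coordinate requires the inverse isometry $\|J_{n}^{-1}\|=1$ to guarantee $\|f(x_{i})\|\ge\bigl|\int_{K}R(\varphi,x_{i},0)g_{i}(\varphi)\,d\mu(\varphi)\bigr|$, and then choosing $g_{i}$ by $L_{p'}$-duality so that this integral equals $\|R_{x_{i},0}\|_{L_{p}(\mu)}\ge S(f_{i},x_{i},b_{i})$. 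Coordinating the finitely many normalized functions $g_{1},\ldots,g_{n}$ simultaneously, while keeping the norm estimate of Lemma~\ref{rsa} uniform, is the delicate bookkeeping; once this alignment is set up correctly, the remaining estimates are routine applications of Hölder's inequality and the isomorphism bounds on $J_{n}$.
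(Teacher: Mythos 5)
Your proposal is correct and follows essentially the same route as the paper: in $(a)\Rightarrow(b)$ you build the dominating operator from (\ref{jo3}) with the $g_{i}$ chosen by $L_{p'}$-duality to attain $\bigl(\int_{K}R(\varphi,x_{i},0)^{p}\,d\mu\bigr)^{1/p}$, extract the $i$-th coordinate using $\left\Vert J_{n}^{-1}\right\Vert =1$ (the paper does this via the functionals $y_{i}^{\ast}=e_{i}^{\ast}\circ J_{n}^{-1}$), and get the uniform constant $C=\mu(K)^{1/p}$ from Lemma~\ref{rsa}. Your $(b)\Rightarrow(a)$ argument --- chaining $\left\Vert f_{i}(x_{i})\right\Vert \leq\left\Vert f(x_{i})\right\Vert$ through the summing inequality for the single map $f$ and invoking Lemma~\ref{gol} --- is exactly the paper's proof.
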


\begin{proof}
$(a)\Rightarrow(b)$ By hypothesis, there exists a positive Radon measure $\mu$
such that (recall that we are supposing that $R$ is constant in $G$),
\begin{equation}
\left\Vert u(x)\right\Vert \leq\left(  \int_{K}R\left(  \varphi,x,0\right)
^{p}d\mu(\varphi)\right)  ^{1/p}\label{jj}%
\end{equation}
for all $u\in\mathcal{M}$, $x\in X.$

Given $\{f_{1},...,f_{n}\}\subset\mathcal{M}$, $\{x_{1},...,x_{n}\}\subset E$,
by (\ref{jj}) we have
\begin{equation}
\left\Vert f_{i}(x_{i})\right\Vert \leq\left(  \int_{K}R\left(  \varphi
,x_{i},0\right)  ^{p}d\mu(\varphi)\right)  ^{1/p}\ i=1,...,n. \label{jo}%
\end{equation}
For every $i=1,...,n$, take $g_{i}\in L_{p^{\prime}}(\mu)$ such that
$\left\Vert g_{i}\right\Vert _{p^{\prime}}=1$ and
\begin{equation}
\left(  \int_{K}R\left(  \varphi,x_{i},0\right)  ^{p}d\mu(\varphi)\right)
^{1/p}=\int_{K}R\left(  \varphi,x_{i},0\right)  g_{i}(\varphi)d\mu(\varphi).
\label{jo1}%
\end{equation}
From (\ref{jo}) and (\ref{jo1}), we obtain
\begin{equation}
\left\Vert f_{i}(x_{i})\right\Vert \leq\int_{K}R\left(  \varphi,x_{i}%
,0\right)  g_{i}(\varphi)d\mu(\varphi),\ i=1,...,n. \label{jo2}%
\end{equation}

We now define $y_{i}^{\ast}=e_{i}^{\ast}\circ J_{n}^{-1}$, where $e_{i}^{\ast
}$ are the canonical vectors of $\left(  \ell_{\infty}^{n}\right)  ^{\ast
}\simeq\ell_{1}^{n}$. Notice that $\left\Vert y_{i}^{\ast}\right\Vert \leq1$
for $i=1,...,n$. We also denote by $y_{i}^{\ast}$ a Hahn-Banach extension of
$e_{i}^{\ast}\circ J_{n}^{-1}$ to $Y$. Thus, using $f$ defined in (\ref{jo3})
and Lemma \ref{rsa}, we obtain%

\begin{align*}
\left\Vert f(x_{i})\right\Vert  &  =\sup_{y^{\ast}\in B_{Y^{\ast}}}\left\vert
\langle y^{\ast},f(x_{i})\rangle\right\vert \\
&  \geq\left\vert \langle y_{i}^{\ast},f(x_{i})\rangle\right\vert \\
&  =\left\vert \langle y_{i}^{\ast},\underset{j=1}{\overset{n}{%
%TCIMACRO{\dsum }%
%BeginExpansion
{\displaystyle\sum}
%EndExpansion
}}\left(  \int_{K}R\left(  \varphi,x_{i},0\right)  g_{j}(\varphi)d\mu
(\varphi)\right)  y_{j}\rangle\right\vert \\
&  =\left\vert \underset{j=1}{\overset{n}{%
%TCIMACRO{\dsum }%
%BeginExpansion
{\displaystyle\sum}
%EndExpansion
}}\left(  \int_{K}R\left(  \varphi,x_{i},0\right)  g_{j}(\varphi)d\mu
(\varphi)\right)  \langle y_{i}^{\ast},y_{j}\rangle\right\vert \\
&  =\left\vert \underset{j=1}{\overset{n}{%
%TCIMACRO{\dsum }%
%BeginExpansion
{\displaystyle\sum}
%EndExpansion
}}\left(  \int_{K}R\left(  \varphi,x_{i},0\right)  g_{j}(\varphi)d\mu
(\varphi)\right)  \langle e_{i}^{\ast}\circ J_{n}^{-1},J_{n}e_{j}%
\rangle\right\vert \\
&  =\left\vert \underset{j=1}{\overset{n}{%
%TCIMACRO{\dsum }%
%BeginExpansion
{\displaystyle\sum}
%EndExpansion
}}\left(  \int_{K}R\left(  \varphi,x_{i},0\right)  g_{j}(\varphi)d\mu
(\varphi)\right)  \langle e_{i}^{\ast},e_{j}\rangle\right\vert \\
&  =\int_{K}R\left(  \varphi,x_{i},0\right)  g_{i}(\varphi)d\mu(\varphi)\\
&  \overset{(\ref{jo2})}{\geq}\left\Vert f_{i}(x_{i})\right\Vert
\end{align*}

$(b)\Rightarrow(a)$ It follows from Lemma \ref{gol}. In fact, by hypotheses,
there is a constant $C>0$ and an operator $f\in\mathcal{H}_{RS,p}(X;Y)$
satisfying $\pi_{_{RS,p}}\left(  f\right)  \leq C$ and
\[
\left\Vert f_{i}(x_{i})\right\Vert \leq\left\Vert f(x_{i})\right\Vert
,\ i=1,...,n
\]
for all $\{x_{1},...,x_{n}\}\subset X$ and $\{f_{1},...,f_{n}\}\subset
\mathcal{M}$. Thus%

\[
\left(  \sum_{i=1}^{n}\left\Vert f_{i}(x_{i})\right\Vert ^{p}\right)
^{\frac{1}{p}}\leq\left(  \sum_{i=1}^{n}\left\Vert f(x_{i})\right\Vert
^{p}\right)  ^{\frac{1}{p}}\leq C\sup_{\varphi\in K}\left(  \sum_{i=1}%
^{n}R\left(  \varphi,x_{i},b_{i}\right)  ^{p}\right)  ^{\frac{1}{p}},
\]
for all $\{f_{1},...,f_{n}\}\subset\mathcal{M}$, $\{x_{1},...,x_{n}\}\subset
X$ and $\{b_{1},\ldots,b_{n}\}\subset G$. By Lemma \ref{gol}, $\mathcal{M}$ is
uniformly dominated.
\end{proof}

\section{Consequences of Theorem \ref{dpa}}

In this section we apply Theorem \ref{dpa} to characterize uniformly dominated
sets in several usual classes of $RS$-abstract summing mappings when $Y$ is a
Banach space that has no finite cotype.

\subsection{ Absolutely $p$-summing linear operators}

Observe that a continuous linear operator $T:X\rightarrow Y$ is
absolutely $p$-summing if and only if it is $|R|S$-abstract
$p$-summing with
\[
E=X\text{ and }G=\mathbb{K}%
\]
and $K=B_{X^{\ast}}$, with the weak star topology, ${\mathcal{H}%
}(X;Y)={\mathcal{L}}(X;Y)$ and $R$ and $S$ are defined by:
\[
R\colon B_{X^{\ast}}\times X\times\mathbb{K}\longrightarrow\mathbb{K}%
~,~R(\varphi,x,b)=\varphi(x)
\]%
\[
S\colon{\mathcal{L}}(X;Y)\times X\times\mathbb{K}\longrightarrow
\lbrack0,\infty)~,~S(T,x,b)=\left\Vert T(x)\right\Vert .
\]
We can see that the hypotheses under $R$ and $S$ are
straightforwardly satisfied, so Theorem \ref{dpa} recovers Theorem
\ref{dp}.

\subsection{ Strongly $p$-summing multilinear mappings}

Strongly $p$-summing multilinear mappings were introduced by Dimant
\cite{Dimant}. Let $X_{1},...,X_{n}$ be Banach spaces. A continuous $n$-linear
mapping $T\colon X_{1}\times\cdots\times X_{n}\longrightarrow Y$ is strongly
$p$-summing if there is a constant $C>0$ such that
\begin{equation}
\left(  \sum_{i=1}^{m}\left\Vert T\left(  x_{i}^{1},...,x_{i}^{n}\right)
\right\Vert ^{p}\right)  ^{\frac{1}{p}}\leq C\sup_{\varphi\in B_{\mathcal{L}%
(X_{1},...,X_{n})}}\left(  \sum_{i=1}^{m}|\varphi\left(  x_{i}^{1}%
,...,x_{i}^{n}\right)  |^{p}\right)  ^{\frac{1}{p}}, \label{33Ma}%
\end{equation}
for every $m\in\mathbb{N}$ and $x_{i}^{l}\in X_{l},$ $i=1,...,m,$ $l=1,...,n,$
where $\mathcal{L}(X_{1},...,X_{n})$ is the space of all continuous $n$-linear
forms on $X_{1}\times\cdots\times X_{n}$.

Note that by choosing the parameters $E=X_{1}\times\cdots\times X_{n}$,
$K=B_{(X_{1}\widehat{\otimes}_{\pi}\cdots\widehat{\otimes}_{\pi}X_{n})^{\ast}%
}$, $G=\mathbb{K}$, $\mathcal{H}=\mathcal{L}(X_{1},...,X_{n};Y)$ and $R$ and
$S$ are defined by:
\[
R\colon B_{(X_{1}\widehat{\otimes}_{\pi}\cdots\widehat{\otimes}_{\pi}%
X_{n})^{\ast}}\times(X_{1}\times\cdots\times X_{n})\times\mathbb{K}%
\longrightarrow\mathbb{K}~,~R(\varphi,(x^{1},...,x^{n}),b)=\varphi
(x^{1}\otimes...\otimes x^{n})
\]%
\[
S\colon\mathcal{L}(X_{1},...,X_{n};Y)\times(X_{1}\times\cdots\times
X_{n})\times\mathbb{K}\longrightarrow\lbrack0,\infty)~,~S(T,(x^{1}%
,...,x^{n}),b)=\left\Vert T(x^{1},...,x^{n})\right\Vert ,
\]
we can easily conclude that $T:X_{1}\times\cdots\times
X_{n}\rightarrow Y$ is strongly $p$-summing if and only if $T$ is
$|R|S$-abstract $p$-summing.

It is immediate to verify that the hypotheses needed for $R$ and $S$
are also straightforwardly satisfied in this case. Thus, Theorem
\ref{dpa} provides a characterization of uniformly dominated set in
classes of strongly $p$-summing multilinear mappings.

\subsection{ $p$-semi-integral multilinear mappings}

The class $p$-semi-integral multilinear mappings was introduced in \cite{CD},
inspired by previous work of R. Alencar and M.C. Matos. Let $X_{1},...,X_{n}$
be Banach spaces. A continuous $n$-linear mapping $T\colon X_{1}\times
\cdots\times X_{n}\longrightarrow Y$ is $p$-semi-integral if there is a
constant $C>0$ such that
\begin{equation}
\left(  \sum_{i=1}^{m}\left\Vert T\left(  x_{i}^{1},...,x_{i}^{n}\right)
\right\Vert ^{p}\right)  ^{\frac{1}{p}}\leq C\sup_{\varphi_{j}\in
B_{X_{j}^{\ast}},j=1,...,n}\left(  \sum_{i=1}^{m}|\varphi_{1}\left(  x_{i}%
^{1}\right)  \cdots\varphi_{n}\left(  x_{i}^{n}\right)  |^{p}\right)
^{\frac{1}{p}}, \label{33Mb}%
\end{equation}
for every $m\in\mathbb{N}$ and $x_{i}^{l}\in X_{l},$ $i=1,...,m,$ $l=1,...,n$.

Choosing the parameters $E=X_{1}\times\cdots\times X_{n}$, $K=B_{X_{1}^{\ast}%
}\times\cdots\times B_{X_{n}^{\ast}}$, $G=\mathbb{K}$, $\mathcal{H}%
=\mathcal{L}(X_{1},...,X_{n};Y)$ and $R$ and $S$ are defined by:
\[
R\colon(B_{X_{1}^{\ast}}\times\cdots\times B_{X_{n}^{\ast}})\times(X_{1}%
\times\cdots\times X_{n})\times\mathbb{K}\longrightarrow\mathbb{K}%
\]
such that%

\[
R\left(  (\varphi_{1},...,\varphi_{n}),(x^{1},...,x^{n}),b\right)
=\varphi_{1}\left(  x^{1}\right)  \cdots\varphi_{n}\left(  x^{n}\right)
\]
and%

\[
S\colon\mathcal{L}(X_{1},...,X_{n};Y)\times(X_{1}\times\cdots\times
X_{n})\times\mathbb{K}\longrightarrow\lbrack0,\infty)~,~S(T,(x^{1}%
,...,x^{n}),b)=\left\Vert T(x^{1},...,x^{n})\right\Vert ,
\]
we observe that $T:X_{1}\times\cdots\times X_{n}\rightarrow Y$ is
$p$-semi-integral if and only if $T$ is $|R|S$-abstract $p$-summing.

In this case, Theorem \ref{dpa} provides a characterization of uniformly
dominated sets in this context.

\subsection{ Homogeneous mappings}

Let $X$ a Banach space. In \cite{mona} a continuous mapping $u:X\rightarrow Y$
such that%
\begin{equation}
\left\Vert u(\lambda x)\right\Vert \geq\left\vert \lambda\right\vert
\left\Vert u(x)\right\Vert \label{tq}%
\end{equation}
for all $\left(  x,\lambda\right)  $ $\in X\times\mathbb{K}$ is called $1$-subhomogeneous.

From (\cite[Theorem 2.3 (b)]{mona}) a $1$-subhomogeneous map $u$ is absolutely
$p$-summing if and only if there is a $C>0$ such that%
\begin{equation}
\left(  \sum_{j=1}^{m}\left\Vert u(x_{j})\right\Vert ^{p}\right)  ^{\frac
{1}{p}}\leq C\sup_{\varphi\in B_{X^{\ast}}}\left(  \sum_{j=1}%
^{m}\left\vert \varphi(x_{j})\right\vert ^{p}\right)  ^{\frac{1}{p}}
\label{22333333}%
\end{equation}
for every positive integer $m.$ It is simple to note that a $1$-subhomogeneous
map $u$ is absolutely $p$-summing if and only if it is $RS$-abstract
$p$-summing with
\[
E=X\text{ and }G=\mathbb{R}%
\]
and $K=B_{X^{\ast}}$, with the weak star topology, ${\mathcal{H}}(X;Y)$ is the
space of continuous homogeneous mappings from $X$ to $Y$ and $R$ and $S$ are
defined by
\[
R\colon B_{X^{\ast}}\times X\times\mathbb{R}\longrightarrow\lbrack
0,\infty)\subset\mathbb{R}~,~R(\varphi,x,b)=|\varphi(x)|
\]%
\[
S\colon{\mathcal{H}}(X;Y)\times X\times\mathbb{R}\longrightarrow
\lbrack0,\infty)~,~S(T,x,b)=\left\Vert T(x)\right\Vert .
\]

As expected, Theorem \ref{dpa} also characterizes uniformly dominated sets in
this classes of operators.

\section{Final Application: Absolutely summing arbitrary mappings}

\bigskip Let $X$ be a Banach space. Following \cite[Definition 4.1]{BPRn}, an
arbitrary mapping $u:X\rightarrow Y$ is absolutely $p$-summing at $a\in X$ if
there is a $C>0$ so that
\begin{equation}
\left(  \sum_{j=1}^{m}\left\Vert u(a+x_{j})-u(a)\right\Vert ^{p}\right)
^{\frac{1}{p}}\leq C\sup_{\varphi\in B_{X^{\ast}}}\left(  \sum_{j=1}%
^{m}\left\vert \varphi(x_{j})\right\vert ^{p}\right)  ^{\frac{1}{p}}
\label{mmu222}%
\end{equation}
for every natural number $m$ and every $x_{1},...,x_{m}\in X.$

Choosing $E=X$, $G=\mathbb{R}$, $K=B_{X^{\ast}}$, with the weak star topology,
${\mathcal{H}}(X;Y)$ being the set of maps from $X$ to $Y$ and $R$, $S$ being
defined by
\[
R\colon B_{X^{\ast}}\times X\times\mathbb{R}\longrightarrow\lbrack
0,\infty)\subset\mathbb{R}~,~R(\varphi,x,b)=|\varphi(x)|
\]%
\[
S\colon{\mathcal{H}}(X;Y)\times X\times\mathbb{R}\longrightarrow
\lbrack0,\infty)~,~S(h,x,b)=\left\Vert h(x)-h(0)\right\Vert
\]
we conclude that an arbitrary mapping $h:X\rightarrow Y$ is absolutely
$p$-summing at $0\in X$ if and only if $h$ is $RS$-abstract $p$-summing.

A simple reformulation of the hypotheses that appear just below (\ref{jo3})
allows us to prove that Theorem \ref{dpa} is valid in this more arbitrary
context, with $a=0$ (we just need to ask that $S(g,x,b)=\left\Vert
g(x)\right\Vert $ just for $g=f$ defined in (\ref{jo3}) and not for all
$g$)$.$ Besides, since
\[
f(0)=\underset{j=1}{\overset{n}{%
%TCIMACRO{\dsum }%
%BeginExpansion
{\displaystyle\sum}
%EndExpansion
}}\left(  \int_{K}R\left(  \varphi,0,0\right)  g_{j}(\varphi)d\mu
(\varphi)\right)  y_{j}=\underset{j=1}{\overset{n}{%
%TCIMACRO{\dsum }%
%BeginExpansion
{\displaystyle\sum}
%EndExpansion
}}\left(  \int_{K}|\varphi(0)|g_{j}(\varphi)d\mu(\varphi)\right)  y_{j}=0,
\]
we in fact have $S(f,x,b)=\left\Vert f(x)\right\Vert ;$ so the re-formulation
of Theorem \ref{dpa} also characterizes uniformly dominated sets of absolutely
summing arbitrary maps. To summarize:

\begin{theorem}
Let $1\leq p<\infty.$ Let $Y$ is a Banach space with no finite cotype and
$\mathcal{M}$ be a subset of the set of all arbitrary mappings $u:X\rightarrow
Y$ which are absolutely $p$-summing at $0$ (denoted by $\mathcal{H}%
_{RS,p}(X;Y)$). The following statements are equivalent:

(a) $\mathcal{M}$ is uniformly dominated.

(b) There is a constant $C>0$ such that, for every $\{x_{1},...,x_{n}\}\subset
X$ and $\{f_{1},...,f_{n}\}\subset\mathcal{M}$, there exists an operator
$f\in\mathcal{H}_{RS,p}(X;Y)$ satisfying $\pi_{_{RS,p}}\left(  f\right)  \leq
C$ and%
\[
\left\Vert f_{i}(x_{i})\right\Vert \leq\left\Vert f(x_{i})\right\Vert
,\ i=1,...,n.
\]

\end{theorem}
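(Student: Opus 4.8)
The plan is to obtain this statement as a direct specialization of Theorem~\ref{dpa} to the parameters $E=X$, $G=\mathbb{R}$, $K=B_{X^{\ast}}$ with the weak-star topology, $\mathcal{H}(X;Y)$ the family of all maps $X\to Y$, $R(\varphi,x,b)=|\varphi(x)|$ and $S(h,x,b)=\|h(x)-h(0)\|$, under which, as recorded just above the statement, membership in $\mathcal{H}_{RS,p}(X;Y)$ is exactly being absolutely $p$-summing at $0$. The only feature of this setting not covered verbatim by the proof of Theorem~\ref{dpa} is that there one assumed $S(g,x,b)=\|g(x)\|$ for \emph{every} $g$, whereas now $S(h,x,b)=\|h(x)-h(0)\|$. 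So the first step is to isolate, in the proofs of Lemma~\ref{rsa} and of Theorem~\ref{dpa}, precisely where that identity is used.

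First I would revisit Lemma~\ref{rsa}. The Hahn--Banach estimate there bounds $\|f(x)\|$ by $(1+\varepsilon)(\int_{K}R(\varphi,x,0)^{p}d\mu)^{1/p}$ and then reads this as a bound on $S(f,x,b)$; this is the one place where $S(f,\cdot,\cdot)=\|f(\cdot)\|$ is genuinely invoked, and crucially it is invoked only for the single map $f$ built in (\ref{jo3}). Hence it suffices to verify the identity for that particular $f$. Since $R(\varphi,0,0)=|\varphi(0)|=0$, the displayed computation of $f(0)$ gives $f(0)=0$, so $S(f,x,b)=\|f(x)-f(0)\|=\|f(x)\|$; thus Lemma~\ref{rsa} holds unchanged and $\pi_{_{RS,p}}(f)\leq\mu(K)^{1/p}$.

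With Lemma~\ref{rsa} secured, both implications go through with only notational changes. For $(a)\Rightarrow(b)$, the uniform domination hypothesis (\ref{uda}) now reads $\|u(x)-u(0)\|^{p}\leq\int_{K}R(\varphi,x,0)^{p}d\mu$ for $u\in\mathcal{M}$; carrying the term $u(0)$ along, inequalities (\ref{jj})--(\ref{jo2}) become statements about $S(f_{i},x_{i},0)=\|f_{i}(x_{i})-f_{i}(0)\|$, while the Hahn--Banach chain computing $\|f(x_{i})\|$ is unaffected, since it uses only the explicit form of $f$ and the biorthogonal functionals $y_{i}^{\ast}$. The chain then yields $S(f_{i},x_{i},0)\leq\|f(x_{i})\|$, which, because $f(0)=0$, is exactly the domination $S(f_{i},x_{i},0)\leq S(f,x_{i},0)$ recorded in (b). For $(b)\Rightarrow(a)$ I would again invoke Lemma~\ref{gol}: from the operator $f$ of (b) with $\pi_{_{RS,p}}(f)\leq C$, the pointwise domination, and $S(f,x_{i},b_{i})=\|f(x_{i})\|$, one chains $(\sum_{i}S(f_{i},x_{i},b_{i})^{p})^{1/p}\leq(\sum_{i}S(f,x_{i},b_{i})^{p})^{1/p}\leq C\sup_{\varphi}(\sum_{i}R(\varphi,x_{i},b_{i})^{p})^{1/p}$, and Lemma~\ref{gol} then delivers uniform domination of $\mathcal{M}$.

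The step I expect to require the most care is the bookkeeping around the base point $0$: one must consistently read every norm $\|g(x)\|$ appearing in the proof of Theorem~\ref{dpa} as $S(g,x,0)=\|g(x)-g(0)\|$, and then exploit $f(0)=0$ to drop the correction term precisely for the constructed $f$ (so that $S(f,x,0)=\|f(x)\|$), while retaining it for the members $f_{i}\in\mathcal{M}$. Everything else is identical to the linear case of Subsection~4.1: the weak-star compactness of $K=B_{X^{\ast}}$, the continuity (\ref{ll9}) of $R_{x,b}=|\cdot(x)|$, the harmlessness of $R$ being constant in $G=\mathbb{R}$, and the passage between probability and positive Radon measures via the normalization in Lemma~\ref{gol}.
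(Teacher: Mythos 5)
Your proposal is correct and follows essentially the paper's own route: the paper's entire argument for this theorem is precisely your observation that the hypothesis $S(g,x,b)=\Vert g(x)\Vert$ enters the proofs of Lemma \ref{rsa} and Theorem \ref{dpa} only for the single map $f$ of (\ref{jo3}), and that $R(\varphi,0,0)=|\varphi(0)|=0$ forces $f(0)=0$, so $S(f,x,b)=\Vert f(x)-f(0)\Vert=\Vert f(x)\Vert$ and everything applies verbatim. One tiny remark: in your $(b)\Rightarrow(a)$ step the identity $S(f,x_i,b_i)=\Vert f(x_i)\Vert$ for the operator $f$ \emph{furnished by} (b) is neither justified (such an $f$ need not satisfy $f(0)=0$) nor needed, since the chain closes using only the pointwise domination read through $S$ together with $\pi_{_{RS,p}}(f)\leq C$ to bound $\bigl(\sum_{i}S(f,x_i,b_i)^{p}\bigr)^{1/p}$ — the same implicit reading the paper itself adopts.
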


\begin{remark}
With some more technical effort it is also possible to prove the above result
for $a\neq0$ but we omit the details.
\end{remark}

\section{Lineability of $\Pi_{p}\left(  X,\ell_{\infty}\right)  \smallsetminus
\mathcal{M}$}

We say that the subset $M$ of a vector space $E$ is \emph{lineable} if
$M\cup\{0\}$ contains an infinite dimensional linear space (see \cite{aron,
bernal, seo} and the references therein).

In this section we investigate the size of the set $\Pi_{p}\left(
X,\ell_{\infty}\right)  \smallsetminus\mathcal{M}$ from the point of view of
the theory of lineability. In fact, we show that, up to the null vector,
$\Pi_{p}\left(  X,\ell_{\infty}\right)  \smallsetminus\mathcal{M}$ contains a
subspace of dimension $\mathfrak{c}$ (cardinality of the
continuum)$\mathfrak{.}$ Let us suppose that $\mathcal{M}\subset\Pi_{p}\left(
X,\ell_{\infty}\right)  $ is uniformly dominated and $\Pi_{p}\left(
X,\ell_{\infty}\right)  \smallsetminus\mathcal{M}$ is non void. If $T\in
\Pi_{p}\left(  X,\ell_{\infty}\right)  \smallsetminus\mathcal{M}$ and $\mu$
\ is a positive Radon measure defined on the compact space $\left(
B_{X^{\ast}},\sigma(X^{\ast},X)\right)  ,$ then%
\begin{equation}
\left\Vert T(x)\right\Vert ^{p}>\int_{B_{X^{\ast}}}\left\vert \varphi
(x)\right\vert ^{p}d\mu(\varphi) \label{777}%
\end{equation}
for some $x\in X$.

Now we separate the set of positive integers $\mathbb{N}$ into countably many
infinite pairwise disjoint subsets $\left(  A_{k}\right)  _{k=1}^{\infty}.$
For each positive integer $k$, let us denote%
\[
A_{k}=\left\{  a_{1}^{(k)}<a_{2}^{(k)}<\cdots\right\}
\]
and consider%
\[
\ell_{\infty}^{(k)}=\left\{  x\in\ell_{\infty}:x_{j}=0\text{ if }j\notin
A_{k}\right\}  .
\]
Now, for each fixed positive integer $k$, we define%
\[
T_{k}:X\rightarrow\ell_{\infty}^{(k)}%
\]
given by%
\[
\left(  T_{k}\left(  z\right)  \right)  _{a_{j}^{(k)}}=\left(  T\left(
z\right)  \right)  _{j}%
\]
for all positive integer $j.$ Thus, for any fixed $k$, let $v_{k}%
:X\rightarrow\ell_{\infty}$ be given by%
\[
v_{k}=i_{k}\circ T_{k},
\]
where $i_{k}:\ell_{\infty}^{(k)}\rightarrow\ell_{\infty}$ is the canonical
inclusion. It is plain that%
\[
\left\Vert v_{k}(z)\right\Vert =\left\Vert T_{k}(z)\right\Vert =\left\Vert
T(z)\right\Vert
\]
for every positive integer $k$ and $z\in X.$ Thus, each $v_{k}$ satisfies
(\ref{777}). Note also that the operators $v_{k}$ have disjoint supports and
thus the set $\left\{  v_{1},v_{2},...\right\}  $ is linearly independent.
Consider the operator%
\[
S:\ell_{1}\rightarrow\Pi_{p}\left(  X,\ell_{\infty}\right)
\]
given by%
\[
S\left(  \left(  a_{k}\right)  _{k=1}^{\infty}\right)  =%
%TCIMACRO{\tsum \limits_{k=1}^{\infty}}%
%BeginExpansion
{\textstyle\sum\limits_{k=1}^{\infty}}
%EndExpansion
a_{k}v_{k}.
\]
Since%
\begin{align*}%
%TCIMACRO{\tsum \limits_{k=1}^{\infty}}%
%BeginExpansion
{\textstyle\sum\limits_{k=1}^{\infty}}
%EndExpansion
\left\Vert a_{k}v_{k}\right\Vert  &  =%
%TCIMACRO{\tsum \limits_{k=1}^{\infty}}%
%BeginExpansion
{\textstyle\sum\limits_{k=1}^{\infty}}
%EndExpansion
\left\vert a_{k}\right\vert \left\Vert v_{k}\right\Vert \\
&  =%
%TCIMACRO{\tsum \limits_{k=1}^{\infty}}%
%BeginExpansion
{\textstyle\sum\limits_{k=1}^{\infty}}
%EndExpansion
\left\vert a_{k}\right\vert \left\Vert T\right\Vert \\
&  =\left\Vert T\right\Vert
%TCIMACRO{\tsum \limits_{k=1}^{\infty}}%
%BeginExpansion
{\textstyle\sum\limits_{k=1}^{\infty}}
%EndExpansion
\left\vert a_{k}\right\vert <\infty
\end{align*}
we conclude that $S$ is well-defined and, moreover, $S$ is linear and
injective. Since the supports of the operators $v_{k}$ are pairwise disjoint,
we conclude that $S(\ell_{1})$ satisfies (\ref{777}). In fact, suppose that
$b_{1},....,b_{m}$ are all nonzero scalars and $a_{1},...,a_{m}$ are also
scalars (and there is no loss of generality in supposing $a_{1}$ non null).
For any positive Radon measure $\mu$ we have (recall the definition of the
$v_{k}$ and that this norm is in $\ell_{\infty}$)
\begin{align*}
\left\Vert b_{1}%
%TCIMACRO{\tsum \limits_{k=1}^{\infty}}%
%BeginExpansion
{\textstyle\sum\limits_{k=1}^{\infty}}
%EndExpansion
a_{k}v_{k}(x)+\cdots+b_{m}%
%TCIMACRO{\tsum \limits_{k=1}^{\infty}}%
%BeginExpansion
{\textstyle\sum\limits_{k=1}^{\infty}}
%EndExpansion
a_{k}v_{m}(x)\right\Vert ^{p}  &  \geq\left\Vert b_{1}a_{1}v_{1}(x)\right\Vert
^{p}\\
&  =\left\vert b_{1}a_{1}\right\vert ^{p}\left\Vert v_{1}(x)\right\Vert ^{p}\\
&  >\left\vert b_{1}a_{1}\right\vert ^{p}\int_{B_{X^{\ast}}}\left\vert
\varphi(x)\right\vert ^{p}d\mu(\varphi).
\end{align*}
Now replacing $\mu$ by the positive Radon measure $\left\vert b_{1}%
a_{1}\right\vert ^{p}\mu$, denoted by $\psi$, we have%
\[
\left\Vert b_{1}%
%TCIMACRO{\tsum \limits_{k=1}^{\infty}}%
%BeginExpansion
{\textstyle\sum\limits_{k=1}^{\infty}}
%EndExpansion
a_{k}v_{k}(x)+\cdots+b_{m}%
%TCIMACRO{\tsum \limits_{k=1}^{\infty}}%
%BeginExpansion
{\textstyle\sum\limits_{k=1}^{\infty}}
%EndExpansion
a_{k}v_{m}(x)\right\Vert ^{p}>\int_{B_{X^{\ast}}}\left\vert \varphi
(x)\right\vert ^{p}d\psi(\varphi)
\]
Thus, since the map $\mu\leftrightarrow\left\vert b_{1}a_{1}\right\vert
^{p}\mu$ is a bijection in the set of positive Radon measures we have
\[
S(\ell_{1})\subset\left(  \Pi_{p}\left(  X,\ell_{\infty}\right)
\smallsetminus\mathcal{M}\right)  \cup\left\{  0\right\}
\]
and the proof is done, because $\dim\left(  \ell_{1}\right)  =\mathfrak{c}.$

\end{document}